\newcommand*{\bR}{\mathbf{R}}
\newcommand*{\Bal}{\mathrm{Bal}}
\newtheorem{Th}{Theorem}[section]
\newtheorem{Prop}[Th]{Proposition}
\newtheorem{Lem}[Th]{Lemma}
\def\sep{\;\vrule\;}
\def\c{{\rm cap}}
\begin{document}

\title{Asymptotically sharp Markov and Schur inequalities on general sets}

\author{Sergei Kalmykov
 \\
Bolyai Institute, University of Szeged; \\
Far Eastern Federal University; and\\
Institute of Applied Mathematics, FEBRAS
\and
B\'ela Nagy
\\
MTA-SZTE Analysis and Stochastics Research Group, 
\\Bolyai Institute, University of Szeged
\and
Vilmos Totik
\\
Bolyai Institute, University of Szeged;
and\\
Department of Mathematics and Statistics, \\
University of South Florida
}

%igy nem lesz datum:
\date{}

\maketitle

\begin{abstract}
Markov's inequality for algebraic polynomials on $\left[-1,1\right]$
goes back to more than a century and it is widely used in approximation theory.
Its asymptotically
sharp  form  for unions of finitely many intervals has been found
only in 2001 by the third author.
In this paper we extend this asymptotic form to arbitrary compact
subsets of the real line satisfying an interval condition.
With the same method a sharp local version of Schur's inequality
is given for such sets.
\end{abstract}

\section{Results}

Markov's inequality is one of the most fundamental results in
approximation theory,  it states that  if $P_{n}$
is an algebraic polynomial of degree $n$,
then
\begin{equation}
\left\Vert P_{n}'\right\Vert_{[-1,1]} \le n^{2}\left\Vert P_{n}\right\Vert_{[-1,1]}, \label{eq:classicmarkov}
\end{equation}
where $\left\Vert \cdot\right\Vert_{[-1,1]} $ is the sup-norm over $\left[-1,1\right]$.
It is sharp, for the classical Chebyshev polynomials equality holds.

When one considers the analogue of (\ref{eq:classicmarkov}) on a set $K$ consisting of several
intervals, a new feature emerges: if $a_j$, $j=1,\ldots,2m$ are the endpoints
of the intervals that make up $K$, then around each $a_j$ there is a local Markov inequality
\[
\left\Vert P_{n}'\right\Vert_{[a_j-\varepsilon,a_j+\varepsilon]\cap K} 
\le (1+o(1))M(K,a_j)n^{2}\left\Vert P_{n}\right\Vert_{K},
\]
with some best constants $M(K,a_j)$, where $o(1)$ tends to 0 uniformly in $P_n$ as the degree $n$
tends to infinity. In general, these local Markov factors $M(K,a_j)$
are different, they depend on the location of $a_j$ in the set $K$.
The paper \cite{Totikacta} gave an explicit expression for them.
The asymptotically sharp global Markov inequality
\[\left\Vert P_{n}'\right\Vert_{K} \le (1+o(1))
\Bigl (\max_j M(K,a_j)\Bigr)n^{2}\left\Vert P_{n}\right\Vert_{K}\]
is then an immediate consequence.

The aim of this paper is to prove a sharp local version of (\ref{eq:classicmarkov})
for general compact subsets of $\mathbf{R}$ rather than for $[-1,1]$ or for sets consisting of
finitely many intervals. To this end we call the point $a$
a right-endpoint of the compact set $K\subset \mathbf{R}$ if there is a $\rho>0$ such that
\begin{equation}
\left[a-2\rho,a\right]\subset K\mbox{\ \  and \ \ }(a,a+2\rho)\cap K=\emptyset.\label{intc}
\end{equation}
We shall refer to (\ref{intc}) as the interval condition.
The numbers $a,\rho$ will be fixed for the whole paper, and we shall always
assume that $K\subset\mathbf{R}$ satisfies this condition.

We introduce the (asymptotic) Markov factor for $K$ at the endpoint $a$ as
\begin{equation}
M\left(K,a\right):=\limsup_{n\rightarrow\infty}\ \sup\left\{ \frac{\|P_{n}'\|_{[a-\rho,a]}}{n^{2}\left\Vert P_{n}\right\Vert _{K}}\sep \ \deg\left(P_{n}\right)\le n\right\}. \label{def:markovfact}
\end{equation}
Without changing the value of $M(K,a)$,
the norm in the numerator could have been taken
instead of $[a-\rho,a]$ on any interval $[a-\eta,a]$ so long as $[a-\eta-\varepsilon,a]\subset K$
for some $\varepsilon>0$. This is because on compact
subsets of the interior ${\rm Int}(K)$ of $K$ the norm of $P_n'$ is at most
$Cn\|P_n\|_K$ with some
$C$ (depending on the compact subset), see (\ref{Bernstein}) below.
In a similar manner, $M(K,a)$ would not change if
we used $|P_n'(a)|$ in  (\ref{def:markovfact}) instead of
$\|P_{n}'\|_{[a-\rho,a]}$. This is not absolutely trivial, but
it will follow from the considerations below.

To formulate the results, we need some potential theory and we
refer to the books \cite{Gardiner}, \cite{Landkof}, \cite{Ransford} or
 \cite{SaffTotik} for an introduction.
In particular, we denote the equilibrium measure
of $K\subset \mathbf{R}$ of positive logarithmic capacity $\c(K)>0$ by $\nu_{K}$.
This is absolutely continuous on the (one dimensional) interior of $K$,
and we
denote its density with respect to the Lebesgue-measure
by $\omega_{K}$: $\frac{d\nu_{K}\left(t\right)}{dt}=\omega_{K}\left(t\right)$.
If $K$ satisfies the interval condition (\ref{intc}) then, around $a$,
the density $\omega_K$ behaves like $1/\sqrt{|t-a|}$.
The Markov factor $M(K,a)$ is related to the quantity
\[
\Omega\left(K,a\right):=\lim_{t\rightarrow a-0}\omega_{K}\left(t\right)\left|t-a\right|^{1/2}.
\]
It will be proven in the next section that this limit exists, positive and finite, and with it we can state
\begin{Th}
\label{thmain} If $K\subset \mathbf{R}$ satisfies the interval condition (\ref{intc})
at a point $a\in K$,
then
\begin{equation} M(K,a)=2\pi^2 \Omega(K,a)^2.\label{eq1}\end{equation} 
\end{Th}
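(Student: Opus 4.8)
\emph{Strategy.} The plan is to establish the two inequalities in (\ref{eq1}) separately, bracketing $M(K,a)$ between quantities associated with polynomial preimages of $[-1,1]$, for which the statement is essentially classical.

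\emph{Lower bound.} First I would invoke the polynomial inverse image approximation of \cite{Totikacta}: given $\delta>0$, choose a polynomial $S$ of degree $N$ with $E:=S^{-1}([-1,1])\supseteq K$, where $E$ agrees with $K$ on a fixed neighbourhood of $a$, $(a,a+2\rho)\cap E=\emptyset$, and $S(a)=1$. Then $a$ is a right endpoint of a component of $E$, necessarily $S'(a)>0$, and from $\omega_E(t)=|S'(t)|/(N\pi\sqrt{1-S(t)^2})$ one gets, as $t\to a-0$, $\omega_E(t)\sim\sqrt{S'(a)}\,(N\pi\sqrt2)^{-1}|t-a|^{-1/2}$, that is, $S'(a)/N^2=2\pi^2\Omega(E,a)^2$. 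Taking $P_n:=T_{\lfloor n/N\rfloor}\circ S$ with $T_m$ the degree-$m$ Chebyshev polynomial of $[-1,1]$, we have $\deg P_n\le n$, $\|P_n\|_K\le\|P_n\|_E=1$ and $P_n'(a)=\lfloor n/N\rfloor^2 S'(a)=(1+o(1))\,2\pi^2\Omega(E,a)^2 n^2$, so $M(K,a)\ge 2\pi^2\Omega(E,a)^2$. Since monotonicity of Green's functions under inclusion together with the square-root behaviour of $\omega_K$ near $a$ gives $\Omega(E,a)\to\Omega(K,a)$ as $\delta\to0$, this yields $M(K,a)\ge 2\pi^2\Omega(K,a)^2$.

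\emph{Upper bound.} An inner polynomial preimage approximation is not available when $\mathrm{Int}(K)$ is thin, so here I would argue directly. Write each $x\in[a-\rho,a]$ as $x=a-s^2$ with $s\in[0,\sqrt\rho]$ and ``open up'' the endpoint via $z=a+w^2$, setting $Q_n(w):=P_n(a+w^2)$ (an even polynomial) and $F(s):=Q_n(is)$, so that $F'(0)=0$, $P_n'(a)=-\tfrac12 F''(0)$, and $P_n'(a-s^2)=-F'(s)/(2s)$. The crucial input is the endpoint behaviour of the Green's function, $g_K(a+\zeta)=(2\pi\Omega(K,a)+o(1))\,\mathrm{Re}\sqrt{\zeta}$ as $\zeta\to0$, with the branch cut of $\sqrt{\zeta}$ along the negative reals so that $g_K$ vanishes on $[a-2\rho,a]$ (this, and the existence of $\Omega(K,a)\in(0,\infty)$, are to be proven in the next section). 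Via the Bernstein--Walsh inequality $|P_n(z)|\le\|P_n\|_K e^{n g_K(z)}$ this forces $F$ to behave, near $0$, like an entire function of exponential type $\sigma_n=(2\pi\Omega(K,a)+o(1))n$ that is bounded on $\mathbf{R}$ by $\|P_n\|_K$. For such $F$, Bernstein's inequality gives $|F'(s)|\le\sigma_n^2\,|s|\,\|P_n\|_K$ (using $F'(0)=0$ and that $F'$ again has exponential type $\sigma_n$), whence $|P_n'(a-s^2)|=|F'(s)|/(2s)\le\tfrac12\sigma_n^2\|P_n\|_K=(1+o(1))\,2\pi^2\Omega(K,a)^2 n^2\|P_n\|_K$ uniformly for $s\in[0,\sqrt\rho]$. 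This gives $M(K,a)\le 2\pi^2\Omega(K,a)^2$ and, as a by-product, shows that $\|P_n'\|_{[a-\rho,a]}$ may be replaced by $|P_n'(a)|$ in (\ref{def:markovfact}).

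\emph{The main obstacle.} The delicate point is precisely the assertion that $F$ is bounded by $\|P_n\|_K$ on all of $\mathbf{R}$: this holds only on the bounded segment $|s|\le\sqrt{2\rho}$, where $a-s^2\in K$, while on the rest of $\mathbf{R}$ one has merely $|F(s)|\le\|P_n\|_K e^{n g_K(a-s^2)}$, and a crude contour estimate there degrades the sharp constant (the bare Cauchy estimate already loses a multiplicative factor). I would neutralise this tail by the standard localization: multiply $P_n$ by a fast-decreasing polynomial $V_m$ of degree $m=\lfloor\varepsilon n\rfloor$ with $V_m(a)=1$, $0\le V_m\le1$ on $K$ and $V_m(x)\le\exp(-c_\varepsilon\sqrt{n(a-x)})$ on $[a-2\rho,a]$, so that $\|P_n V_m\|_K\le\|P_n\|_K$, $(P_n V_m)'(a)=P_n'(a)+O((\varepsilon n)^2)$, and the transformed $P_n V_m$ is essentially supported on the scale $|s|=O(1/n)$, on which $g_K$ is controlled; applying the exponential-type estimate to $P_n V_m$ with $n$ replaced by $(1+\varepsilon)n$ and then letting $\varepsilon\to0$ recovers the constant $2\pi^2$. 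It is this localization-plus-exponential-type step, rather than the softer potential-theoretic bookkeeping, that I expect to carry the real weight of the argument; it is also the mechanism that will, with the obvious modifications, produce the sharp local Schur inequality announced in the abstract.
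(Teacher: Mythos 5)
Your lower bound is essentially the paper's: the paper cites the sharpness formula (4.7) of \cite{Totikacta}, whose underlying construction is exactly $T_{\lfloor n/N\rfloor}\circ S$ for a polynomial preimage $E=S^{-1}[-1,1]$ together with $\Omega(E,a)\to\Omega(K,a)$. (One small gap in your version: for a general $K$ one cannot directly produce $E\supseteq K$ of the form $S^{-1}[-1,1]$; one first passes to the finite--interval set $K_m^+$ and then applies the inverse--image density theorem to $K_m^+$, which is what \cite{Totikacta} does and the paper inherits by citation.)

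Your upper bound is a genuinely different route from the paper's. The paper reduces everything to the case of finitely many intervals, handled in \cite{Totikacta} by the polynomial inverse image method: it localizes with fast-decreasing polynomials $Q_{n\varepsilon}$ and applies the finite-interval result on the outer sets $K_m^+$, then removes the regularity hypothesis via inner regular approximants $K_m^-$ (Ancona) and Proposition \ref{prop:convergence}. You instead open up the endpoint with $z=a+w^2$ and try to treat $F(s)=P_n(a-s^2)$ as if it were an entire function of exponential type $\sigma_n=(2\pi\Omega(K,a)+o(1))n$ bounded on $\mathbf{R}$, and then apply Bernstein's inequality for exponential type. This is a legitimate strategy in spirit, but as written it has real gaps. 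First, $F$ is a polynomial, not of exponential type $\sigma_n$: the Green's function asymptotics $g_K(a+\zeta)=(2\pi\Omega(K,a)+o(1))\operatorname{Re}\sqrt{\zeta}$ are valid only as $\zeta\to 0$, so the exponential-type bound $|F(s)|\le\|P_n\|_K e^{\sigma_n|\operatorname{Im} s|}$ holds only on a small disk $|s|\le r_n$ with $r_n\to 0$; outside it the bound degrades and $F$ grows like a degree-$2n$ polynomial. To legitimately invoke the exponential-type Bernstein inequality one needs a rescaling/normal-families limit at scale $|s|\sim 1/n$, and that step is not carried out. Second, the localizer you posit, $V_m$ of degree $\lfloor\varepsilon n\rfloor$ with $V_m(a)=1$, $0\le V_m\le 1$ on $K$, and $V_m(x)\le\exp(-c_\varepsilon\sqrt{n(a-x)})$ on $[a-2\rho,a]$, is not of the standard fast-decreasing type (which gives $e^{-n\tau}$ outside a fixed neighbourhood, as in properties a)--c) used in the paper), and its existence at that degree with that decay profile is asserted without justification. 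Even granting it, it starts to bite only for $a-x\gtrsim 1/n$, i.e.\ $|s|\gtrsim n^{-1/2}$, so it does not confine $F$ to the scale $|s|=O(1/n)$ on which the Green's function asymptotics are sharp, and it says nothing about the growth of $V_m(a-s^2)$ for complex $s$. Third, your inequality $|F'(s)|\le\sigma_n^2|s|\,\|P_n\|_K$ is claimed uniformly for $s\in[0,\sqrt\rho]$, but the exponential-type control you have is local near $s=0$; away from $0$ the sharp constant is not available by this argument, whereas the theorem bounds $\|P_n'\|_{[a-\rho,a]}$, not just $|P_n'(a)|$. (Near $a$ but outside a $1/n^2$--neighbourhood one can fall back on the pointwise Bernstein bound (\ref{Bernstein}), so the missing range is $a-x\lesssim 1/n^2$; this is exactly where your argument needs to be tight and where the gap sits.) In short, the paper's reduction to finite unions of intervals via $K_m^\pm$ buys a clean proof at the cost of invoking \cite{Totikacta}; your direct endpoint-opening argument would be more self-contained but, as sketched, the exponential-type and localization steps need substantial further work to close.
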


There is another problem which can be solved via the quantity $\Omega(K,a)$,
namely Schur's inequality on general sets. The original Schur inequality
(see e.g. \cite[Theorem 6.1.2.8]{Milovanovic})
claims that if $P_n$ is a polynomial of degree at most $n$ for which
\begin{equation} |P_n(x)|\le \frac{1}{\sqrt{1-x^2}},\qquad x\in (-1,1),\label{Schur0}\end{equation} 
then
\begin{equation} \|P_n\|_{[-1,1]}\le n+1.\label{Schur}\end{equation} 
The next theorem gives an asymptotically optimal
local version of this for general subsets of $\mathbf{R}$ rather than $[-1,1]$.
\begin{Th}
\label{thschur} 
Let $K$ be a compact subset of $\mathbf{R}$ with
the interval condition (\ref{intc}) at a point $a\in K$. 
Suppose that for polynomials
$P_n$ of degree $n=1,2,\ldots$ we have
\begin{equation} |P_n(x)|\le \frac{h(x)}{\sqrt{a-x}},\qquad x\in [a-\rho,a),\label{local}\end{equation} 
with some positive and continuous function $h$ on $[a-\rho,a]$, and assume also the global
condition
\begin{equation} 
\limsup_{n\rightarrow\infty} \|P_n\|_K^{1/n}\le 1.\label{global}\end{equation} 
Then
\begin{equation} \|P_n\|_{[a-\rho,a]}\le n(1+o(1))2\pi h(a)\Omega(K,a).\label{conc}\end{equation} 
This estimate is sharp for any $h$, for there are polynomials $P_n\not\equiv 0$ satisfying
(\ref{local}) and (\ref{global}) for which
\begin{equation} |P_n(a)|\ge n(1-o(1))2\pi h(a)\Omega(K,a).\label{conc1}\end{equation} 
\end{Th}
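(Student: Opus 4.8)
The plan is to reduce the Schur-type estimate to the Markov-type result of Theorem \ref{thmain} by a standard "integrate the derivative" argument, after first replacing the right-hand side of (\ref{local}) by an honest polynomial majorant. The key observation is that a bound of the form $|P_n(x)|\le h(x)/\sqrt{a-x}$ near a right-endpoint is, up to the weight $1/\sqrt{a-x}$, a sup-norm bound; the factor $\sqrt{a-x}$ is precisely what $\omega_K$ looks like near $a$, so integrating against it should produce the constant $2\pi h(a)\Omega(K,a)$ rather than the Markov constant $2\pi^2\Omega(K,a)^2$.

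First I would localize: by (\ref{global}) and the fact that on compact subsets of ${\rm Int}(K)$ one has $\|P_n'\|\le Cn\|P_n\|_K$, it suffices to estimate $\|P_n\|_{[a-\eta,a]}$ for a small $\eta\le\rho$ and to control $\|P_n\|_K$. Actually the cleaner route is to consider $Q_n(x):=P_n(x)^2(a-x)$, which by (\ref{local}) satisfies $|Q_n(x)|\le h(x)^2$ on $[a-\rho,a)$, hence $\|Q_n\|_{[a-\rho,a]}\le \|h\|_{[a-\rho,a]}^2=:H$ is bounded independently of $n$, while $\deg Q_n=2n+1$ and, by (\ref{global}), $\limsup_n\|Q_n\|_K^{1/n}\le 1$. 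Now write, for $x\in[a-\eta,a]$,
\[
P_n(x)^2(a-x)=Q_n(x)=Q_n(a)-\int_x^a Q_n'(t)\,dt,
\]
and since $Q_n(a)=0$ (because of the factor $(a-x)$, assuming $P_n$ stays bounded, which it does on $[a-\rho,a)$ only up to the weight — here one must be slightly careful and instead evaluate the identity at a point $x$ and use $Q_n(a)=\lim_{t\to a-}Q_n(t)$ which exists and is $\le 0$ in the limit sense; more safely, use that $|P_n(x)|\sqrt{a-x}\le h(x)$ so $Q_n(a^-)=0$). Thus $|P_n(x)|^2(a-x)\le \|Q_n'\|_{[a-\eta,a]}\,(a-x)$, giving $|P_n(x)|^2\le \|Q_n'\|_{[a-\eta,a]}$, i.e. $\|P_n\|_{[a-\eta,a]}^2\le \|Q_n'\|_{[a-\eta,a]}$.

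It remains to bound $\|Q_n'\|_{[a-\eta,a]}$ by the local Markov inequality. The polynomial $Q_n/\max(H,\|Q_n\|_K)$ has degree $2n+1$ and sup-norm on $K$ at most $1$ (using (\ref{global}) to absorb the subexponential growth of $\|Q_n\|_K$ into the $o(1)$), so Theorem \ref{thmain} applied with degree $2n+1$ yields
\[
\|Q_n'\|_{[a-\eta,a]}\le (1+o(1))\,2\pi^2\Omega(K,a)^2(2n+1)^2\max\bigl(H,\|Q_n\|_K\bigr).
\]
The trouble is that this carries the factor $(2n)^2$, not $n$, and a spurious $\max(H,\|Q_n\|_K)$ — so a crude application of Markov is far too lossy. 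The \emph{main obstacle}, and the place where the argument has to be genuinely sharp, is therefore \emph{not} to use the global Markov bound but a weighted/local one: one must show that the specific structure $Q_n=P_n^2(a-x)$ with $|Q_n|\le h^2$ forces $\|Q_n'\|_{[a-\eta,a]}=O(n)$ with the right constant. I expect this to be done exactly as in the proof of Theorem \ref{thmain}, by mapping the situation to the model case of the unit circle / an arc via a conformal or polynomial change of variable that flattens $K$ near $a$, where $\omega_K(t)|t-a|^{1/2}\to\Omega(K,a)$ enters. Concretely, on the model interval the extremal behaviour is governed by a Chebyshev-like polynomial, $Q_n$ behaves like $h(a)^2$ times a normalized Chebyshev polynomial of degree $\sim 2n$ near the endpoint, whose derivative at the endpoint is $\sim (2n)^2/2$ times its sup-norm but whose value near the endpoint — at distance $\sim 1/n^2$ — is already comparable to the sup-norm, and one extracts $\|P_n\|\sim \sqrt{\|Q_n'\|\cdot(\text{distance})}$ with the distance scale $1/n^2$ conspiring to turn $(2n)^2$ into $2n$ and $\Omega^2$ into $\Omega$. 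This interplay between the $n^{-2}$ length scale near the endpoint and the square-root weight is exactly the content already extracted in the proof of \eqref{eq1}, so I would quote the relevant lemmas from Section 2 rather than redo the conformal analysis.

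For the sharpness claim (\ref{conc1}), I would exhibit the extremal polynomials explicitly: take $R_{2n+1}$ to be (a normalization of) the polynomial realizing equality in Theorem \ref{thmain} at degree $\sim 2n$, which near $a$ looks like $\cos\bigl(2n\cdot c\sqrt{a-x}(1+o(1))\bigr)$ after the change of variable $x\mapsto \sqrt{a-x}$ that linearizes the problem, with $c$ chosen so that $2\pi c=$ the relevant constant; then set $P_n(x):=h(a)\,R_{2n+1}(x)/\sqrt{a-x}$ suitably regularized — but since dividing by $\sqrt{a-x}$ is illegal, one instead builds $P_n$ directly as the polynomial whose square times $(a-x)$ approximates $h(a)^2\cos^2(\cdots)$. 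The cleanest construction: let $T_n$ be the polynomial of degree $n$ with $|T_n(x)|\le 1/\sqrt{a-x}$ on $[a-\rho,a)$ and $T_n(a)$ maximal, which is the genuine Schur extremal problem on $K$; by the same mapping to the model case its value is $(1-o(1))2\pi n\Omega(K,a)$, and then $P_n:=h(a)T_n$ does the job, satisfying (\ref{local}) with $h(a)$ in place of $h$ (hence with $h$ after shrinking $\rho$, since $h$ is continuous and positive so $h\ge h(a)-\varepsilon$ near $a$ — one re-examines whether one needs $h(a)$ or $\min h$; the continuity of $h$ at $a$ is what makes $h(a)$ the correct constant in both directions) and (\ref{global}) by construction, with $|P_n(a)|=(1-o(1))2\pi n h(a)\Omega(K,a)$. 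The existence and asymptotics of $T_n$ follow from the same potential-theoretic normal-families argument used to prove that $\Omega(K,a)$ is a positive finite limit; I would organize the write-up so that the direct and the reverse estimates both flow from one lemma about the model Schur problem on a single interval, transplanted to $K$ via the comparison of equilibrium densities that underlies Theorem \ref{thmain}.
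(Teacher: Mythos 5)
Your approach is genuinely different from the paper's, but it has a quantitative gap that cannot be closed by the lemmas you propose to cite. The paper transplants the problem to $[-1,1]$ via a polynomial inverse image: for an $\varepsilon$-enlargement $K^*\supset K$ of the form $K^*=T_N^{-1}[-1,1]$ it localizes with a fast-decreasing polynomial $Q_{\varepsilon n}$, symmetrizes over the $N$ preimage branches to produce $S_n=\sum_k(P_nQ_{\varepsilon n})(t_k)=V_m\circ T_N$, applies the \emph{classical} Schur inequality to $V_m$ on $[-1,1]$, and pulls back; the constant $2\pi h(a)\Omega(K,a)$ emerges from the explicit formula (\ref{tnp}) relating $\Omega(K^*,a)$ to $T_N'(a)$. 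You instead try to go from Markov (Theorem \ref{thmain}) to Schur by setting $Q_n=P_n^2(a-x)$ and writing $\|P_n\|^2_{[a-\eta,a]}\le\|Q_n'\|_{[a-\eta,a]}$.

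The trouble is that this reduction loses a factor of $\sqrt2$, which is fatal for an asymptotically sharp inequality. Already on the model case $K=[-1,1]$, $a=1$, $h(x)=1/\sqrt{1+x}$, take the Schur extremal $P_n=\mathcal{T}_{n+1}'/(n+1)$. Then $Q_n(x)=(1-\mathcal{T}_{n+1}(x)^2)/(1+x)$ has degree $2n+1$, $Q_n(1)=0$, $|Q_n'(1)|=(n+1)^2$, and local sup-norm $\approx 1/2=h(1)^2$. Since $2\pi^2\Omega([-1,1],1)^2=1$, applying Theorem \ref{thmain} to $Q_n$ (even granting a perfect localization, which you have not supplied and which is a real issue since $\|Q_n\|_K$ is not controlled by (\ref{global}) alone) gives at best $|Q_n'(1)|\le(1+o(1))(2n)^2\cdot\tfrac12=(1+o(1))2n^2$, twice the truth $(n+1)^2$; hence $|P_n(1)|\le(1+o(1))\sqrt2\,n$ rather than $(1+o(1))n$. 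The loss is structural: $Q_n$ vanishes at $a$ and so is far from a Markov extremal, and what you would really need is a sharp Markov inequality for polynomials constrained to vanish at the endpoint (where the extremal is of the form $(1\pm\mathcal{T}_m)/2$ and the leading constant is halved). No such refinement appears in the paper, and it is of comparable depth to Theorem \ref{thschur} itself, so ``quoting the relevant lemmas from Section 2'' (which concern only the behaviour of $\omega_K$ and $\Omega$) does not close the gap. (Your aside that one must show $\|Q_n'\|=O(n)$ also misstates the needed order: you need $O(n^2)$ with the right constant.) Finally, for (\ref{conc1}) you simply name $T_n$ as the solution of the local Schur extremal problem and assert its asymptotics; establishing those asymptotics \emph{is} (\ref{conc1}), so this part of the proposal is circular. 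The paper avoids this by constructing the extremal explicitly as $h(a)H_m(T_N(x))U_{\sqrt n}(x)\sqrt{2|T_N'(a)|}/(1+\eta)^2$ from Chebyshev derivatives pulled back through $T_N$.
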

The $o(1)$ in (\ref{conc}) depends only on the function $h$ and on the
speed of convergence in (\ref{global}).

The condition (\ref{global}) is a very weak one, but something like
that is needed, for the polynomials $P_n$ cannot be arbitrary outside $[a-\rho,a]$:
just set $K=[-2,1]$, $a=1$, and with the classical Chebyshev polynomials
${\mathcal{T}}_n(x) = \cos(n\arccos x)$ consider $P_n(x)={\mathcal{T}}_{n+1}'(x)/(n+1)$. In this
case (\ref{local}) is true with $\rho=1$, $h(x)=1/\sqrt{1+x}$
(apply Bernstein's inequality (\ref{B}) below), but (\ref{conc}) does not hold
because $P_n(1)=n+1$ and $\Omega(K,1)=1/\pi\sqrt3$.

Schur's inequality (\ref{Schur}) is one way to deduce Markov's inequality
(\ref{eq:classicmarkov}) from Bernstein's inequality
\begin{equation}  |P_n'(x)|\le \frac{n}{\sqrt{1-x^2}}\|P_n\|_{[-1,1]},\qquad x\in (-1,1).\label{B}\end{equation} 
The same happens with (\ref{conc}) and the estimate
$M(K,a)\le 2\pi^2\Omega(K,a)^2$ in Theorem \ref{thmain}.
In fact, if $K\subset \mathbf{R}$ is a regular compact set
(regular with respect to the Dirichlet problem in $\overline{\mathbf{C}}\setminus K$), then
the Bernstein-Walsh lemma
(see e.g. \cite[p.\ 77]{Walsh} or \cite[Theorem 5.5.7]{Ransford})
and Cauchy's formula for the derivative
of an analytic function easily give that
\[\left\|{P_n'}\right\|_{K}=e^{o(n)}\|P_n\|_K.\]
This implies (\ref{global}) for the polynomial $P_n'(x)/n\|P_n\|_K$.
On the other hand, \cite[Theorem 3.1]{Totikacta} (see also
\cite{Baran}) claims that on the interior of $K$
we have
\begin{equation} 
|P_n'(x)|\le n\pi\omega_K(x)\|P_n\|_K,\label{Bernstein}
\end{equation} 
hence
\begin{equation} 
\left|\frac{P_n'(x)}{n\|P_n\|_K}\right|\le \frac{h(x)}{\sqrt{a-x}},\qquad x\in [a-\rho,a],
\label{fd}
\end{equation} 
where $h(x)=\sqrt {a-x}\pi \omega_K(x)$ on $[a-\rho,a]$ (and extend this $h$ to
an arbitrary continuous and positive function from there). This
is condition (\ref{local}) for the polynomial $P_n'(x)/n\|P_n\|_K$.
Furthermore, here $h(a)=\pi\Omega(K,a)$.
Therefore, we can apply Theorem \ref{thschur} to conclude
that
\[ 
\left\|\frac{P_n'}{n\|P_n\|_K}\right\|_{[a-\rho,a]}\le n(1+o(1))2\pi\Bigl(\pi\Omega(K,a)\Bigr) \Omega(K,a),
\]
which implies
\begin{equation}  
\|P_n'\|_{[a-\rho,a]}\le n^2(1+o(1))2\pi^2\Omega(K,a)^2\|P_n\|_K,\label{hhh}
\end{equation} 
which is precisely the inequality $M(K,a)\le 2\pi^2\Omega(K,a)^2$ in Theorem \ref{thmain}.
When $K$ is not regular, in the
reasoning above,
instead of $K$, just use the sets $K_m^-\subset K$ from
(\ref{knm}) to be introduced in Section \ref{Sec2}, make the conclusion
\begin{multline*}
\|P_n'\|_{[a-\rho,a]}\le n^2(1+o(1))2\pi^2\Omega(K_m^-,a)^2\|P_n\|_{K_m^-}
\\
\le n^2(1+o(1))2\pi^2\Omega(K_m^-,a)^2\|P_n\|_{K},
\end{multline*}
instead of (\ref{hhh}), and use the fact that, by Proposition
\ref{prop:convergence} below, the quantity
$ \Omega(K_m^-,a)$ is as close to $\Omega(K,a)$ as we wish if $m$ is sufficiently large.

\medskip

The quantity $\Omega(K,a)$ has been formulated in terms of the equilibrium density, but we can
give a direct formulation as follows. $\mathbf{R}\setminus K$ is the union of countably many
open intervals: $\mathbf{R}\setminus K=\cup_{j=0}^\infty I_j$, where, say, $I_0$ and $I_1$ are
the two unbounded complementary intervals (if $K$ itself consists of finitely many intervals,
then the preceding union should be replaced by finite one). We may also assume that
the numbering is such that $I_2$ contains
$(a,a+2\rho)$ (if $I_0\cup I_1$ does not do so). For $m\ge 2$ consider the set
\begin{equation} K^+_m=\mathbf{R}\setminus \left(\bigcup_{j=0}^m I_j\right).\label{knp}\end{equation} 
This contains $K$, it satisfies the interval condition (\ref{intc}), and it
consists of $m$ disjoint closed intervals: $K_m^+=\cup_{j=1}^m[a_{j,m},b_{j,m}]$ with
$a_{1,m}\le b_{2,m}<a_{2,m}\le b_{2,m}<\cdots<a_{m,m}\le b_{m,m}$.
When $a_{j,m}=b_{j,m}$ for some $j$, then the corresponding interval
is degenerated, and we can drop it from the consideration below, so
we may assume $a_{j,m}<b_{j,m}$ for all $j=1,\ldots,m$.
The equilibrium density of $K_m^+$ is (see e.g. \cite[Lemma 2.3]{Totikacta})
\begin{equation} 
\omega_{K_m^+}(x)=\frac{\prod_{j=1}^{m-1}|x-
\lambda_{j,m}|}{\pi\sqrt{\prod_{j=1}^m|x-a_{j,m}||x-b_{j,m}|}},\qquad x\in {\rm Int}(K_m^+),
\label{omega}
\end{equation} 
where $\lambda_{j,m}$ are chosen so that
\begin{equation} \int_{b_{k,m}}^{a_{k+1,m}}\frac{\prod_{j=1}^{m-1}(t-
\lambda_{j,m})}{\sqrt{\prod_{j=1}^m|t-a_{j,m}||t-b_{j,m}|}}dt=0
\label{omega1}\end{equation} 
for all $k=1,\ldots,m-1$. It can be easily shown that these
$\lambda_{j,m}$'s  are
uniquely determined and there is one $\lambda_{j,m}$ on every contiguous
interval
$(b_{k,m},a_{k+1,m})$.
Now $a$ is one of the $b_{j,m}$'s, say $a=b_{j_0,m}$, and then
clearly
\begin{equation} 
\Omega(K_m^+,a)=\frac{\prod_{j=1}^{m-1}|a-
\lambda_{j,m}|}{\pi\sqrt{\prod_{j=1}^m|a-a_{j,m}|}\sqrt{\prod_{j=1,\ j\not=j_0}^m|a-b_{j,m}|}}.
\label{finite}
\end{equation} 

When $K$ consists of finitely many intervals, i.e. $K=K_m^+$ for some $m$,
then this gives an explicit expression for $\Omega(K,a)$.
In the general case, since $K_{m+1}^+\subset K_m^+$, the equilibrium measure $\nu_{K_{m+1}^+}$ is the
balayage of  $\nu_{K_{m}^+}$ onto  $K_{m+1}^+$ (see \cite[Theorem IV.1.6,e]{SaffTotik}),
hence
$\omega_{K_{m+1}^+}(t)\ge \omega_{K_{m}^+}(t)$ for all $t\in {\rm Int}(K_{m+1}^+)$.
As a consequence, the sequence $\{\Omega(K_m^+,a)\}_{m=2}^\infty$ is increasing,
and we shall verify in the next section that
\[\Omega(K,a)=\lim_{m\rightarrow\infty}\Omega(K_m^+,a).\]

The just used monotonicity argument will be used later:
if $K\subset S$ both satisfy the interval condition
(\ref{intc}), then
\begin{equation} \Omega(S,a)\le \Omega(K,a).\label{mon}\end{equation}

\section{Properties of $\Omega(K,a)$}\label{Sec2}

First, we show that the limit $\Omega\left(K,a\right)$ exists in
a uniform way.

Let
\[
\mathcal{E}:=\Bigl\{K\subset\bR\sep K\mbox{ compact, satisfies (\ref{intc})}\Bigr\}.
\]
\begin{Lem}\label{lem:uniformity}
For all $K\in\mathcal{E}$ there exists $L_{K}\in\left(0,\infty\right)$
such that
\[
\lim_{t\rightarrow a-0}\omega_{K}\left(t\right)\left|t-a\right|^{1/2}=L_{K}.
\]
Moreover, this convergence is uniform in $K\in\mathcal{E}$: for every
$\varepsilon>0$ there exists $\delta>0$ such that for all $K\in\mathcal{E}$,
$t\in\left(a-\delta,a\right)$ we
have
\[
\left|\omega_{K}\left(t\right)\left|t-a\right|^{1/2}-L_{K}\right|<\varepsilon.
\]
\end{Lem}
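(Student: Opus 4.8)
The plan is to reduce the statement to the model set $[a-2\rho,a]$ by using balayage and the monotonicity of equilibrium densities, and to control the error via a ``remainder'' measure that is supported away from $a$. Write $K=[a-2\rho,a]\cup K'$ where $K'=K\setminus(a-2\rho,a]$ is compact and contained in $(-\infty,a-2\rho]$. Since $\nu_K$ is the balayage of $\nu_{[a-2\rho,a]}$ onto... — more precisely, a cleaner route: recall that for $[a-2\rho,a]\subset K$ the measure $\nu_K$ restricted to $[a-2\rho,a]$ can be compared with $\nu_{[a-2\rho,a]}$. Concretely, by \cite[Theorem~IV.1.6,e]{SaffTotik} the balayage $\Bal(\nu_{[a-2\rho,a]},K)$ equals $\nu_K$ only when $[a-2\rho,a]=K$; in general what holds is $\nu_K\ge\omega_{[a-2\rho,a]}$ on the interior near $a$ via the inclusion argument already used in the paper for $K_m^+$. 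So first I would establish the two-sided bound: there is a constant $C=C(\rho,\operatorname{diam} K)$ and, for the upper bound, we compare $K$ with the single interval $[\,\min K,a\,]\supset$ the part of $K$ left of $a$; for the lower bound we use $[a-2\rho,a]\subset K$. Both give $\omega_K(t)|t-a|^{1/2}$ bounded above and below on $(a-\rho,a)$ uniformly over $\mathcal E$ once $\operatorname{diam}K$ is bounded — and the general case reduces to bounded diameter because scaling/translation only affects $\rho$ in a controlled way, or one simply notes $\Omega(K,a)$ is monotone in $K$ so it suffices to treat $K\subset[a-R,a+R]$ for large fixed $R$ and let $R\to\infty$ at the end.

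The heart of the matter is the existence and uniformity of the limit. The key representation is the following: on $(a-2\rho,a)$ one has, by the formula for the equilibrium density of an interval plus a correction,
\begin{equation*}
\omega_K(t)=\frac{1}{\pi}\cdot\frac{1}{\sqrt{(a-t)(t-(a-2\rho))}}+r_K(t),
\end{equation*}
where $r_K$ is the density of a (signed) measure obtained by balayage of the ``outside'' part and is analytic — in particular continuous — across $t=a$ from the left, uniformly in $K\in\mathcal E$. Here the crucial point, and the one I expect to be the main obstacle, is proving that $r_K(t)$ stays bounded and equicontinuous near $a$ uniformly over \emph{all} $K\in\mathcal E$, including highly irregular (non-regular) $K$ and $K$ with infinitely many components accumulating near $a-2\rho$ or elsewhere. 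The tool for this is a Green's-function/balayage estimate: $r_K$ comes from $\Bal(\nu_{[a-2\rho,a]}|_{\text{complement}},\cdot)$ or, better, from writing $\omega_K-\omega_{[a-2\rho,a]}$ as the density of the balayage onto $K$ of a measure supported on $\mathbf R\setminus K\subset(-\infty,a-2\rho]\cup[a,\infty)$ together with (in the non-regular case) a harmonic-measure term; since all mass producing $r_K$ sits at distance $\ge$ (something like) $2\rho$ from $a$ — actually at distance $\ge 0$ from $a$ but the singular part of $\omega_K$ at $a$ is entirely captured by the interval term — the Poisson/Green kernel yields $|r_K(t)|\le C$ and $|r_K(t)-r_K(s)|\le C|t-s|^{1/2}$ for $t,s$ within $\rho$ of $a$, with $C$ depending only on $\rho$ and the fixed diameter bound.

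Granting this, set $L_K:=\sqrt{2\rho}/\pi\,\sqrt{2\rho}\cdots$ — concretely $\displaystyle L_K=\lim_{t\to a-0}\Bigl(\frac{|t-a|^{1/2}}{\pi\sqrt{(a-t)(t-(a-2\rho))}}+|t-a|^{1/2}r_K(t)\Bigr)=\frac{1}{\pi\sqrt{2\rho}}+0$? — no: $r_K(a)$ need not vanish, so $L_K=\dfrac{1}{\pi\sqrt{2\rho}}$ only when there is nothing to the left; in general $|t-a|^{1/2}r_K(t)\to 0$ as $t\to a-0$ because $r_K$ is bounded, hence in fact $L_K=\dfrac{1}{\pi\sqrt{2\rho}}$ for \emph{every} $K\in\mathcal E$. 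Wait — that cannot be right either, since $\Omega(K,a)$ genuinely depends on $K$. The resolution is that the correct ``interval'' comparison is not $[a-2\rho,a]$ but the full left component $[\,c_K,a\,]$ of $a$ in $K$, whose left endpoint $c_K\le a-2\rho$ varies with $K$; with that choice the singular term is $\tfrac1\pi\bigl((a-t)(t-c_K)\bigr)^{-1/2}$, its $|t-a|^{1/2}$-weighted limit is $1/(\pi\sqrt{a-c_K})$, and then $L_K=1/(\pi\sqrt{a-c_K})+\lim_{t\to a-}|t-a|^{1/2}\tilde r_K(t)=1/(\pi\sqrt{a-c_K})$ again with a bounded $\tilde r_K$, so the limit exists and is finite and positive; but now monotonicity \eqref{mon} and the chain $\{K_m^+\}$ recover the genuine $K$-dependence because passing from $[c_K,a]$-type sets to $K$ changes things only through the bounded remainder whose weighted limit is $0$, \emph{yet} the remainder itself (not its weighted limit) is exactly what differs — I would instead, to be safe, just prove existence of $L_K$ abstractly via monotone/dominated convergence along $K_m^+\downarrow$-type approximants using $\omega_{K_{m+1}^+}\ge\omega_{K_m^+}$ together with the explicit formula \eqref{finite}, and get uniformity from the equicontinuity of the family $\{|t-a|^{1/2}\omega_K(t)\}_{K\in\mathcal E}$ on $(a-\delta,a)$, which follows from the balayage-kernel Hölder estimate above. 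Equicontinuity plus pointwise existence of the limit at $a-0$ for each $K$ upgrades, by a standard Arzelà–Ascoli/uniform-Cauchy argument, to the uniform statement: choose $\delta$ so small that the oscillation of every member on $(a-\delta,a)$ is $<\varepsilon$; then $|\,|t-a|^{1/2}\omega_K(t)-L_K\,|\le\varepsilon$ there for all $K$. The main obstacle remains the uniform Hölder/boundedness estimate for the remainder density over the whole class $\mathcal E$; everything else is soft.
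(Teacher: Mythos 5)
Your proposed decomposition
\[
\omega_K(t)=\frac{1}{\pi\sqrt{(a-t)\bigl(t-(a-2\rho)\bigr)}}+r_K(t)
\]
with $r_K$ \emph{bounded and equicontinuous} near $a$ cannot be correct, and you in fact notice the symptom: it would force $\lim_{t\to a-}|t-a|^{1/2}\omega_K(t)=1/(\pi\sqrt{2\rho})$ for every $K$, i.e.\ $\Omega(K,a)$ would be independent of $K$. The correction term is \emph{not} bounded near $a$; it has its own $1/\sqrt{a-t}$ singularity, and that singularity is precisely what carries the $K$-dependence. The right bookkeeping goes through the balayage relation $\nu_{[a-\rho,a]}=\Bal(\nu_K,[a-\rho,a])$, which gives
\[
\omega_K(t)=\omega_{[a-\rho,a]}(t)-\int_{K\setminus[a-\rho,a]}\Bal\bigl(\delta_x,[a-\rho,a];t\bigr)\,d\nu_K(x),
\qquad t\in(a-\rho,a),
\]
and the subtracted integral blows up like $1/\sqrt{a-t}$ because each kernel $\Bal(\delta_x,[a-\rho,a];t)$ does — see the explicit formula $\Bal(\delta_x,[b,a];t)=\tfrac{1}{\pi}\sqrt{|x-b||x-a|}\,\bigl(|t-x|\sqrt{|t-a||t-b|}\bigr)^{-1}$. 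Multiplying by $\sqrt{|t-a|}$, each kernel tends to the \emph{finite nonzero} limit $L_x=\tfrac{1}{\pi}\sqrt{|x-b|}\,\bigl(\sqrt{|x-a||a-b|}\bigr)^{-1}$, and the whole point is that this convergence is uniform in $x\in\mathbf{R}\setminus[a-2\rho,a+\rho]$ (an elementary estimate on the explicit kernel). Integrating $d\nu_K(x)$ then yields uniform convergence of $\sqrt{|t-a|}\,\omega_K(t)$ over \emph{all} $K\in\mathcal{E}$ at once, with $L_K$ computed from $\omega_{[a-\rho,a]}$ minus the $\nu_K$-average of $L_x$.

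Your fallback — ``prove existence abstractly via $K_m^+$ approximants and get uniformity from equicontinuity of $\{|t-a|^{1/2}\omega_K\}$'' — is circular, because that equicontinuity \emph{is} the hard part, and you only sketch getting it from a ``balayage-kernel H\"older estimate'' that your own decomposition cannot deliver (your remainder isn't bounded). Also, the reduction to bounded diameter is unnecessary once you use the Dirac-balayage kernel: the estimate is uniform over all $x$, arbitrarily far from $a$, since $L_x\to 1/(\pi\sqrt{\rho})$ as $|x|\to\infty$ and the approach to $L_x$ is controlled by the explicit formula. So the missing idea is to recognize that the comparison object is $\omega_{[a-\rho,a]}$ together with the \emph{singular} balayage density of the outside part of $\nu_K$, not a bounded additive remainder, and to obtain the uniformity directly from the explicit Dirac-balayage formula rather than from an equicontinuity you haven't established.
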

\begin{proof}
Let $\delta_x$ denote the Dirac measure at $x$ and let
$\Bal\left(\delta_x,\left[b,a\right];t\right)$
denote the density at $t$ of the balayage of $\delta_x$
onto $\left[b,a\right]$, $b<a$.
Sometimes we also use the same notation for the measure: $\Bal\left(\delta_x,\left[b,a\right];H\right)$
denotes the balayage measure of the Borel-set $H$. We use \cite[(4.47), Ch.II]{SaffTotik}:
\begin{equation}
\Bal\left(\delta_x,\left[b,a\right];t\right)=\frac{1}{\pi}\frac{\sqrt{\left|x-b\right|
\left|x-a\right|}}{\left|t-x\right|\sqrt{\left|t-a\right|\left|t-b\right|}}.\label{eq:diracbalayage}
\end{equation}
Thus, in this case clearly
\begin{equation} 
\lim_{t\rightarrow a-0}\Bal\left(\delta_x,\left[b,a\right];t\right)|t-a|^{1/2}
=\frac{1}{\pi}\frac{\sqrt{\left|x-b\right|}}{\sqrt{\left|x-a\right|\left|a-b\right|}}=:L_x.
\label{lx}\end{equation} 
Below we set $b=a-\rho$.
The family
\[\Bigl\{ \Bal\left(\delta_x,[a-\rho,a];t\right)\sep x\in\bR\setminus\left[a-2\rho,a+\rho\right]\Bigr\} \]
is uniform in the sense that for every $\varepsilon>0$ there is a $\delta>0$ such that for all $x\in\bR\setminus\left[a-2\rho,a+\rho\right]$
and for all $t\in\left(a-\delta,a\right)$
we have
\begin{multline}
\left|\sqrt{|a-t|}\cdot\Bal\left(\delta_x,[a-\rho,a];t\right)-
\frac{1}{\pi}\frac{\sqrt{\left|x-b\right|}}{\sqrt{\left|x-a\right|\left|a-b\right|}}\right|
\\
 =
\left|\frac{1}{\pi}\frac{\sqrt{\left|x-b\right|\left|x-a\right|}}{\left|t-x\right|\sqrt{\left|t-b\right|}}-
\frac{1}{\pi}\frac{\sqrt{\left|x-b\right|}}{\sqrt{\left|x-a\right|\left|a-b\right|}}\right|
<\varepsilon,\qquad b=a-\rho.
\label{uniformity_for_balayage}
\end{multline}
This is a simple calculus exercise, we skip it.

If $\mu$ is any positive Borel-measure with compact
support and $\mathrm{supp}\left(\mu\right)\subset\bR\setminus[a-2\rho,a+\rho]$
and $\mu\left(\bR\right)\le1$, then on $(a-\rho,a)$ the density function of the measure
\[\mu^{*}(\cdot):=\int\Bal\left(\delta_x,\left[a-\rho,a\right];\cdot\right)\, d\mu\left(x\right)\]
has the form
\[\frac{d\mu^{*}\left(t\right)}{dt}=\int\Bal\left(\delta_x,\left[a-\rho,a\right];\, t\right)\, d\mu\left(x\right).\]
This follows from (\ref{eq:diracbalayage}):
if we fix $t\in (a-\rho,a)$ then this is away from the support of $\mu^*$, so
\[\Bal\left(\delta_x,\left[a-\rho,a\right];\left[t,t+u\right]\right)/u,\qquad 0<u<\frac{a-t}{2}\]
is bounded and we can use Lebesgue's dominated convergence theorem when taking the limit for $u\rightarrow 0$.

We can rewrite (\ref{uniformity_for_balayage}) as
\[
\left|\sqrt{ |a-t|}\cdot \Bal\left(\delta_x,\left[a-\rho,a\right];\, t\right)-L_{x}\right|
<\varepsilon,\qquad t\in (a-\delta,a).
\]
 Now if this inequality is
integrated with respect to $\mu$ we obtain that the limit
\[\lim_{t\rightarrow a-0}\sqrt{|t-a|}\cdot\frac{d\mu^{*}\left(t\right)}{dt}\]
exists uniformly in the measure $\mu^*$.

Finally, we use that $\nu_{[a-\rho,a]}$  is the balayage of $\nu_K$ onto $[a-\rho,a]$
(see \cite[Theorem IV.1.6,e]{SaffTotik}). When forming this balayage measure the part of
$\nu_K$ that is on $[a-\rho,a]$ is left unchanged, and the rest of
$\nu_K$ is swept onto $[a-\rho,a]$,
and this latter balayage measure is
\[\Bal({\nu_K\vert_{K\setminus [a-\rho,a]}},[a-\rho,a];H)=\int_{K\setminus [a-\rho,a]}
\Bal\left(\delta_x,\left[a-\rho,a\right];H\right)d\nu_K(x)\]
where $H$ is arbitrary (Borel-) set.
Thus, we have the formula
\[
\nu_{\left[a-\rho,a\right]}\left(H\right)={\nu_{K}\vert_{\left[a-\rho,a\right]}}\left(H\right)+
\int_{K\setminus\left[a-\rho,a\right]}\Bal\left(\delta_x,\left[a-\rho,a\right];H\right)\, d\nu_{K}\left(x\right).
\]
Rewriting this for the densities,
we have for $t\in\left(a-\rho,a\right)$
\[
\omega_{K}\left(t\right) =\omega_{\left[a-\rho,a\right]}\left(t\right) -
\int_{K\setminus\left[a-\rho,a\right]}\Bal\left(\delta_x,\left[a-\rho,a\right];t\right)\,
d\nu_{K}\left(x\right).
\]
Now if $\sigma$ is either of the terms
on the right-hand side, then, as we have just seen, the limit
\[\lim_{t\rightarrow a-0} \sqrt{|t-a|}\cdot\sigma(t)\]
exists, and this limit is uniform in the set $K\in {\mathcal{E}}$.
This proves the claim in the lemma.\end{proof}

We shall need to use a theorem of Ancona \cite{MR890353}:
Let $K\subset\bR$ be a compact set of positive logarithmic capacity.
Then, for every $m$, there exists a regular compact set (regular with respect
to the solution of the Dirichlet problem in its complement relative to $\overline{\mathbf{C}}$)
$K_m^{-}\subset K$ such that
\begin{equation} \mathrm{cap}\left(K\right)\le\mathrm{cap}(K_m^{-})+1/m.\label{knm}\end{equation} 
Since the union of two regular compact sets is regular
and $K$ satisfies the interval condition (\ref{intc}),
we may assume that $K_m^-$ also satisfies that condition
(if not, just unite it with $[a-2\rho,a]$), and also that $K_m^-\subseteq K_{m+1}^-$.

\begin{Lem}
\label{lem:kminus_weakstar} For the sets  $K_m^{\pm}$
from (\ref{knp}) and (\ref{knm}) we have
$\nu_{K_m^{\pm}}\rightarrow\nu_{K}$ in weak$^*$ sense as
$m\rightarrow\infty$. \end{Lem}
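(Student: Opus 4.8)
The statement to prove is Lemma~\ref{lem:kminus_weakstar}: that $\nu_{K_m^{\pm}}\to\nu_K$ in the weak$^*$ sense as $m\to\infty$.

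=== MY PROOF PROPOSAL ===

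\textbf{Proof proposal.} The plan is to treat the two cases $K_m^+$ and $K_m^-$ together by exploiting the standard principle of continuity of equilibrium measures under convergence of capacities, together with compactness of the space of probability measures supported in a fixed bounded set. First I would fix a large ball $B$ containing $K$ and all the $K_m^{\pm}$ (for $K_m^+$ one should first intersect with such a ball, or note that only the behaviour on a neighbourhood of $K$ matters since $\nu_{K_m^+}$ is supported on $K_m^+$ which decreases to $K$); then every $\nu_{K_m^{\pm}}$ is a probability measure on the compact set $\overline B$, so the sequence has weak$^*$ cluster points. The heart of the argument is to show that the \emph{only} possible cluster point is $\nu_K$, whence the whole sequence converges.

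Next I would use the monotonicity of supports: $K_{m+1}^+\subset K_m^+$ with $\bigcap_m K_m^+ = K$, and $K_m^-\subset K_{m+1}^-\subset K$ with $\mathrm{cap}(K_m^-)\to\mathrm{cap}(K)$ by (\ref{knm}). Let $\mu$ be any weak$^*$ cluster point, say $\nu_{K_{m_k}^{\pm}}\to\mu$. For the ``$+$'' case, since the supports decrease to $K$ and $K$ is closed, $\mu$ is supported on $K$; for the ``$-$'' case $\mu$ is automatically supported on $K$ since all the $K_m^-\subset K$. Then I would invoke the principle of descent (lower semicontinuity of the logarithmic energy under weak$^*$ convergence, see \cite[Theorem I.6.8 or its Chapter~I analogues]{SaffTotik} or \cite{Landkof}) to get
\[
I(\mu)\le\liminf_{k\to\infty} I(\nu_{K_{m_k}^{\pm}}) = \liminf_{k\to\infty}\log\frac{1}{\mathrm{cap}(K_{m_k}^{\pm})}.
\]
Now $\mathrm{cap}(K_m^-)\to\mathrm{cap}(K)$ by construction, and $\mathrm{cap}(K_m^+)\to\mathrm{cap}(K)$ because $K_m^+\downarrow K$ and capacity is continuous from above along decreasing sequences of compact sets \cite[Theorem I.6.2 or the corollary on monotone limits]{SaffTotik}. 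Hence in both cases $I(\mu)\le\log(1/\mathrm{cap}(K))=I(\nu_K)$. Since $\mu$ is a probability measure on $K$ and $\nu_K$ is the unique minimizer of the energy over such measures (uniqueness of the equilibrium measure, $\mathrm{cap}(K)>0$), we conclude $\mu=\nu_K$.

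Finally, since the sequence of probability measures $\{\nu_{K_m^{\pm}}\}$ lies in a weak$^*$-compact set and has $\nu_K$ as its unique cluster point, it converges to $\nu_K$ in the weak$^*$ sense, which is the assertion. The main obstacle — and the only genuinely delicate point — is the continuity of capacity: for $K_m^-$ it is handed to us by Ancona's construction (\ref{knm}), but for the decreasing sequence $K_m^+$ one must justify $\mathrm{cap}(K_m^+)\to\mathrm{cap}(K)$; this is a standard fact about outer capacity along decreasing compact sets, and it is cleanest to quote it from \cite{SaffTotik} or \cite{Landkof} rather than reprove it. (One should also make sure the equilibrium measures are well defined, i.e. $\mathrm{cap}(K)>0$, which holds since $K\supset[a-2\rho,a]$ by the interval condition.)
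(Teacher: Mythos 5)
Your proof is correct and follows essentially the same route as the paper: extract a weak\(^*\) cluster point supported on \(K\), use the convergence \(\mathrm{cap}(K_m^{\pm})\to\mathrm{cap}(K)\) together with the principle of descent to conclude that the cluster point has minimal logarithmic energy, and then invoke uniqueness of the equilibrium measure. The only cosmetic difference is that you justify capacity convergence separately for the two families (Ancona's bound for \(K_m^-\), continuity from above along decreasing compacts for \(K_m^+\)), whereas the paper cites a single monotone-convergence theorem for capacity.
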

\begin{proof} 
From the monotone convergence of $K_m^\pm$ to $K$ it follows
that $\c(K_m^\pm)\rightarrow \c(K)$ as $m\rightarrow\infty$, see \cite[Theorem 5.1.3]{Ransford}.

Let $\nu_{K_m^{\pm}}{\rightarrow}\nu$, $m\rightarrow\infty$, $m\in {\mathcal{N}}$ (for
some subsequence ${\mathcal{N}}$ of the natural numbers), be a weak$^*$-limit of the sequence
$\{\nu_{K_m^{\pm}}\}$. Clearly, $\nu$ is supported
on $K$ and it has total mass 1.
Let
\[I(\mu)=\int\int \log\frac{1}{|z-t|}d\mu(t)d\mu(z)\]
be the logarithmic energy of a measure $\mu$. The equilibrium measure
$\nu_K$ minimizes this energy among all probablity measures on $K$, and
with it we have the formula
$\c(K)=\exp(-I(\nu_K))$ (see \cite[Definition 5.1.1]{Ransford}).
Now it follows
from $\c(K_m^{\pm})\rightarrow \c(K)$ and from the
principle of descent (see e.g. \cite[Theorem I.6.8]{SaffTotik}),
 that
\[
I(\nu_{K})=\lim_{m\rightarrow\infty,\ m\in {\mathcal{N}}}I\left(\nu_{K_m^{\pm}}\right)
=\liminf_{m\rightarrow\infty,\ m\in {\mathcal{N}}}I\left(\nu_{K_m^{\pm}}\right)\geq I\left(\nu\right)\geq I(\nu_{K}).
\]
But the equilibrium measure $\nu_K$ is the unique measure to minimize the logarithmic
energy among all unit Borel-measures with support on $K$, hence $\nu$ must be
equal to $\nu_K$. Since this is true for all weak$^*$-convergent subsequences of
$\{\nu_{K_m^{\pm}}\}$, the claim in the lemma follows.
\end{proof}

Finally, we verify

\begin{Prop}
\label{prop:convergence} For the sets  $K_m^{\pm}$
from (\ref{knp}) and (\ref{knm}) we have that $\Omega\left(K_{m}^\pm,a\right)\rightarrow\Omega\left(K,a\right)$
as $m\rightarrow\infty$.
\end{Prop}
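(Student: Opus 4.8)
The plan is to reduce the statement to the pointwise convergence of the equilibrium densities at a single interior point, and then to upgrade this to convergence of the limits $\Omega$ by exploiting the \emph{uniformity} in Lemma~\ref{lem:uniformity}. Observe first that each of the sets $K$, $K_m^+$, $K_m^-$ lies in $\mathcal{E}$ (for $K_m^-$ because we arranged $[a-2\rho,a]\subset K_m^-\subset K$, for $K_m^+$ by its construction), and in particular $[a-\rho,a]\subset S$ for every such $S$. Fix $\varepsilon>0$ and let $\delta>0$ be as in Lemma~\ref{lem:uniformity}, so that $\bigl|\sqrt{a-t}\,\omega_S(t)-\Omega(S,a)\bigr|<\varepsilon$ for every $S\in\mathcal{E}$ and every $t\in(a-\delta,a)$. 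Fixing one point $t_0\in(a-\delta,a)$ and applying this to $S=K$ and to $S=K_m^\pm$, the triangle inequality gives
\[
\bigl|\Omega(K_m^\pm,a)-\Omega(K,a)\bigr|\le 2\varepsilon+\sqrt{a-t_0}\,\bigl|\omega_{K_m^\pm}(t_0)-\omega_K(t_0)\bigr|.
\]
Hence it suffices to show $\omega_{K_m^\pm}(t_0)\to\omega_K(t_0)$ as $m\to\infty$, and then let $\varepsilon\to0$.

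For this pointwise statement I would use the balayage identity obtained (for $S=K$) in the proof of Lemma~\ref{lem:uniformity}, whose derivation works verbatim for every $S\in\mathcal{E}$. With $b=a-\rho$ and
\[
g_t(x)=\Bal(\delta_x,[b,a];t)=\frac{1}{\pi}\,\frac{\sqrt{|x-b|\,|x-a|}}{|t-x|\,\sqrt{|t-a|\,|t-b|}}
\]
it reads, for $t\in(b,a)$,
\[
\omega_S(t)=\omega_{[b,a]}(t)-\int_{S\setminus[b,a]}g_t(x)\,d\nu_S(x).
\]
Let $\bar g_t$ be the function that equals $g_t$ on $\mathbf{R}\setminus(b,a)$ and $0$ on $(b,a)$. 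Since $|t-x|$ stays bounded away from $0$ for $x\notin(b,a)$, while $\sqrt{|x-b|}$ and $\sqrt{|x-a|}$ tend to $0$ as $x\to b^-$ and as $x\to a^+$ respectively, $\bar g_t$ is continuous on all of $\mathbf{R}$, and it is bounded. Moreover $\bar g_t$ vanishes on $[b,a]$ and coincides with $g_t$ on $\mathbf{R}\setminus[b,a]$, so $\int_{S\setminus[b,a]}g_t\,d\nu_S=\int\bar g_t\,d\nu_S$ for every $S\in\mathcal{E}$. All the measures $\nu_K$ and $\nu_{K_m^\pm}$ are supported in the fixed compact set $K_2^+$ (recall $K\subset K_2^+$ and $K_m^+\subset K_2^+$, while $K_m^-\subset K$), so the weak$^*$ convergence $\nu_{K_m^\pm}\to\nu_K$ of Lemma~\ref{lem:kminus_weakstar} yields $\int\bar g_{t}\,d\nu_{K_m^\pm}\to\int\bar g_{t}\,d\nu_K$. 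Since $\omega_{[b,a]}(t)$ does not depend on $m$, this gives $\omega_{K_m^\pm}(t)\to\omega_K(t)$ for every fixed $t\in(b,a)$; taking $t=t_0$ completes the reduction above.

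The only genuinely delicate point I anticipate is the passage from weak$^*$ convergence of the equilibrium measures to convergence of their densities at an interior point: weak$^*$ convergence by itself carries no information about densities, and one must localise. The balayage representation does exactly this, as it writes $\omega_S(t_0)$ as the fixed number $\omega_{[b,a]}(t_0)$ minus the integral of a \emph{fixed} bounded continuous function $\bar g_{t_0}$ against $\nu_S$; the small technical check is the continuity of $\bar g_{t_0}$ across the endpoints $a-\rho$ and $a$, and it is here that the interval condition $[a-2\rho,a]\subset S$ is used, guaranteeing in particular that $t_0$ is an interior point of $S$ so that $\omega_S(t_0)$ is a genuine density value. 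Note that this argument handles $K_m^+$ and $K_m^-$ simultaneously and does not appeal to the monotonicity of $\{\Omega(K_m^+,a)\}_m$ recorded earlier.
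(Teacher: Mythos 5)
Your proof is correct, and it takes a genuinely different route from the paper's. The paper works with Green's functions: it sets $u_m=g_{K_m^\pm}-g_K$, shows $u_m\to 0$ uniformly on a circle $C_\rho(a)$, transports the problem by $\zeta=i\sqrt{z-a}$ to a half-disk, invokes the reflection principle and interior gradient estimates for harmonic functions, and then translates the resulting uniform smallness of $\partial u_m/\partial\mathbf{n}_+$ into the uniform convergence of $\sqrt{a-x}\,(\omega_{K_m^\pm}(x)-\omega_K(x))$ on $[a-\rho/2,a]$, from which the Proposition follows. Your argument instead makes essential use of the \emph{uniformity} asserted in Lemma~\ref{lem:uniformity} (which the paper's proof of the Proposition does not actually invoke): that uniformity lets you replace $\Omega(S,a)$ by $\sqrt{a-t_0}\,\omega_S(t_0)$ at a single fixed $t_0$, up to a small error that is the same for all $S\in\mathcal{E}$, thereby reducing the whole claim to the pointwise convergence $\omega_{K_m^\pm}(t_0)\to\omega_K(t_0)$. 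That pointwise convergence you extract from the balayage identity already derived in the proof of Lemma~\ref{lem:uniformity} together with the weak$^*$ convergence of Lemma~\ref{lem:kminus_weakstar}, after correctly observing that $\bar g_{t_0}$ is a fixed bounded continuous test function (continuity across $b$ and $a$ holding because the numerator $\sqrt{|x-b|\,|x-a|}$ vanishes there while $|t_0-x|$ is bounded below). Your approach avoids conformal mapping and reflection altogether and makes the role of Lemma~\ref{lem:uniformity} structural rather than incidental; the paper's approach, by contrast, yields as a by-product the somewhat stronger uniform statement about $\sqrt{a-x}\,(\omega_{K_m^\pm}(x)-\omega_K(x))$ on a whole interval, though that extra strength is not needed for the Proposition. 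Both are complete and correct.
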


\begin{proof} 
Denote Green's function of $\overline{ \mathbf{C}}\setminus K$ with pole at infinity
by $g_{K}\left(z\right)$.  It has the form
\begin{equation} g_K(z)=\int \log |z-t| d\nu_K\left(t\right)-\log\c(K),\label{form}\end{equation} 
see \cite[Sec. 4.4]{Ransford} or formula \cite[(I.4.8)]{SaffTotik}.
Consider the function
\[u_m(z)=g_{K_m^\pm}(z)-g_K(z).\]
This is harmonic in $\overline{\mathbf{C}}\setminus (K\cup K_m^\pm)$.
In view of Lemma \ref{lem:kminus_weakstar}, of $\c(K_m^\pm)\rightarrow \c(K)$ and of (\ref{form}),
the functions $u_m(z)$ tend to 0
uniformly on compact subsets of $\overline{ \mathbf{C}}\setminus K$  as $m\rightarrow\infty$.
Since $[a-2\rho,a]$ is part of all the sets $K_m^{\pm}$, we have
$g_{K_m^\pm}(z),\ g_K(z)\le g_{[a-2\rho,a]}(z)$, so $u_m(z)\rightarrow 0$ as $z\rightarrow a-\rho$,
and this convergence is uniform in $m$.
Thus, $u_m(z)\rightarrow 0$  uniformly on the circle
 $C_{\rho}(a):=\{z\sep |z-a|=\rho\}$. Let $D_\rho(a)$ be the interior of that
 circle.

In what follows we shall use the main branch of the square root function.
$\zeta=i\sqrt{z-a}$ maps $D_\rho(a)\setminus [a-\rho,a]$ onto the upper half-disk
$\{\zeta\sep |\zeta|<\sqrt{\rho},\ \Im \zeta>0\}$, so $v_m(\zeta)=u_m(a-\zeta^2)$ is
a harmonic function there, which vanishes on $(-\sqrt{\rho},\sqrt{\rho})$.
By the reflection principle we can extend it to a harmonic function
in the disk $D_{\sqrt{\rho}}(0)$. From the fact that $u_m\rightarrow 0$
uniformly on $C_\rho(a)$ it is immediate that $v_m\rightarrow 0$ uniformly
on $C_{\sqrt{\rho}}(0)$, so its partial derivative
in the $i$-direction
tends to 0 uniformly on compact subsets of
 $D_{\sqrt{\rho}}(0)$. For $x\in (a-\rho,a)$ we have
\[
\frac{\partial}{\partial\mathbf{n}_+}v_{m}\left(w\right)
=\frac{\partial}{\partial\mathbf{n}_{+}}u_{m}\left(x\right)\cdot2\sqrt{a-x},\qquad
w=-\sqrt{a-x},
\]
where $\partial u_m(x)/\partial {\bf n}^+$ denotes the derivative of $u_m$ with
 respect to the upper normal to $\mathbf{R}$ at $x$.
 As we have just mentioned, the left-hand side
 tends to 0 uniformly on compact subsets of
 $D_{\sqrt{\rho}}(0)\cap \mathbf{R}$.
 It follows that
 $\sqrt{a-x} \cdot (\partial u_m(x)/\partial {\bf n}^+)$ tends uniformly to 0 on $[a-\rho/2,a]$.
 Since
\[
\frac{\partial}{\partial\mathbf{n}_{+}}u_{m}\left(z\right)=
\frac{\partial}{\partial\mathbf{n}_{+}}g_{K_{m}^\pm}\left(z\right)-
\frac{\partial}{\partial\mathbf{n}_{+}}g_{K}\left(z\right)=\pi\omega_{K_{m}^\pm}\left(z\right)-\pi\omega_{K}\left(z\right)
\]
(see e.g. \cite[II.(4.1)]{Nevanlinna}),
this proves the Proposition. \end{proof}

\section{Proof of Theorem \ref{thmain}}
When $K$ consists of finitely many intervals like the sets $K_m^+$ in
(\ref{knp}), the theorem
follows from  \cite[Theorem 4.1]{Totikacta} and from (\ref{finite}).

\begin{proof}[Proof of $M(K,a)\le 2\pi^2\Omega(K,a)^2$]
First we prove this inequality when $K$ is regular with respect to the Dirichlet
problem in $\overline{ \mathbf{C}}\setminus K$, and later we remove
the regularity condition.

So assume that  $K$ is regular and satisfies  the interval condition
(\ref{intc}).  Fix $\varepsilon>0$, and let
$K^{+}:=[\min K,\max K]$.
There exist (see e.g. \cite[Corollary VI.3.6]{SaffTotik})  $0<\tau<1$
and polynomial $Q_{n\varepsilon}$ of $\deg\left(Q_{n\varepsilon}\right)\le n\varepsilon$
such that
\begin{enumerate}[a)]
\item $1-e^{-n\tau}\le Q_{n\varepsilon}\left(x\right)\le1$ if $x\in [a-\rho,a]$,
\item $0\le Q_{n\varepsilon}\left(x\right)\le1$ if $x\in [a-3\rho/2,a-\rho)\cup(a,a+3\rho/2]$,
\item $|Q_{n\varepsilon}\left(x\right)|\le e^{-n\tau}$
if $x\in K^{+}\setminus [a-3\rho/2,a+3\rho/2]$.
\end{enumerate}
In particular,  $\left\Vert Q_{n\varepsilon}\right\Vert _{K^{+}}\le1$.
Let $g_K$ denote Green's function of $\overline{\mathbf{C}}\setminus K$ with pole at infinity.
The
regularity of $K$ implies that $g_{K}$ is continuous and vanishes
on $K$. Hence, there
exists $0<\theta<1$, $\theta=\theta\left(\tau\right)$, such that
\begin{equation}
\mbox{if }x\in\bR,\ \mathrm{dist}\!\left(x,K\right)\le\theta,\mbox{ then }g_{K}\left(x\right)\le\tau^{2}.\label{green_on_kstar}
\end{equation}

Choose a large $m$ such that for the sets $K_m^+$ from
(\ref{knp}) we have {\rm dist}$(x,K)<\theta$ for all $x\in K_m^+$.
We are going to apply \cite[Theorem 4.1]{Totikacta} for the polynomial
$P_{n}Q_{n\varepsilon}$ on $K_m^{+}$ where $P_{n}$ is an arbitrary polynomial
with degree $n$. Then $P_{n}Q_{n\varepsilon}$ is a polynomial of
degree at most $\left(1+\varepsilon\right)n$ and we estimate its
sup-norm on $K_m^{+}$ as follows.
First,
if $x\in K$, then $\left|P_{n}\left(x\right)Q_{n\varepsilon}\left(x\right)\right|\le\left\Vert P_{n}\right\Vert _{K}$,
see properties a)--c) above.
Second, if $x\in K_m^{+}\setminus K$,
then we apply the Bernstein-Walsh lemma
(see e.g. \cite[p.\ 77]{Walsh} or \cite[Theorem 5.5.7]{Ransford}) for $P_{n}$ and property c)
for $Q_{n\varepsilon}$, as well as (\ref{green_on_kstar}) to obtain
\begin{multline*}
\left|P_{n}\left(x\right)Q_{n\varepsilon}\left(x\right)\right|
\le
\left\Vert P_{n}\right\Vert _{K}\exp\left(n\, g_{K}\left(x\right)\right)\exp\left(-n\tau\right)
\\
\le \left\Vert P_{n}\right\Vert _{K}\exp\left(n\tau^{2}\right)\exp\left(-n\tau\right)
\le\left\Vert P_{n}\right\Vert _{K}.
\end{multline*}
Hence,
\begin{equation}
\|P_{n}Q_{n\varepsilon}\|_{K_m^+}\le\left\Vert P_{n}\right\Vert _{K}.\label{eq:pq_estimate}
\end{equation}

Next, for $x\in [a-\rho,a]$
\begin{multline*}
\left|\left(P_{n}Q_{n\varepsilon}\right)'\left(x\right)\right|
\ge
\left|P_{n}'\left(x\right)Q_{\varepsilon n}(x)\right|-\left|P_{n}\left(x\right)Q_{n\varepsilon}'\left(x\right)\right|
\\
\ge\left|P_{n}'\left(x\right)\right|\left(1-e^{-n\tau}\right)-\left|P_{n}\left(x\right)Q_{n\varepsilon}'\left(x\right)\right|
\end{multline*}
 and here we can use the (transformed form of the) Markov inequality (\ref{eq:classicmarkov}) to conclude
 \[\left\Vert Q_{n\varepsilon}'\right\Vert_{K^+} \le C_{1}\varepsilon^{2}n^{2}\]
with some constant $C_1$.
Therefore, for $x\in [a-\rho,a]$
\begin{multline*}
\left|P_{n}'\left(x\right)\right|\left(1-e^{-n\tau}\right)
\le
\left|
\left(P_{n}Q_{n\varepsilon}\right)'\left(x\right)\right|+\left|P_{n}\left(x\right)Q_{n\varepsilon}'
\left(x\right)\right|
\\
\le
\left|\left(P_{n}Q_{n\varepsilon}\right)'\left(x\right)\right|+\left\Vert P_{n}
\right\Vert _{K}C_{1}\varepsilon^{2}n^{2}.
\end{multline*}
Now we use that, as has already been mentioned,
the theorem is true for the set $K_m^+$ since it consists of finitely
many intervals.
Hence, we can continue the preceding estimate as
\begin{multline*}
\le
(\left(1+\varepsilon\right)n)^{2}\left(1+o_{K_m^{+}}\left(1\right)\right)2\pi^2\Omega\left(K_m^{+},a\right)^2
\left\Vert P_{n}Q_{n\varepsilon}\right\Vert _{K_m^{+}}
+\left\Vert P_{n}\right\Vert _{K}C_{1}\varepsilon^{2}n^{2}
\\
\le n^{2}\left\Vert P_{n}\right\Vert _{K}\left(\left(1+o_{K_m^{+}}\left(1\right)\right)\left(1+\varepsilon\right)^{2}2\pi^2\Omega\left(K_m^{+},a\right)^2+C_{1}\varepsilon^{2}\right),
\end{multline*}
where we also used (\ref{eq:pq_estimate}).
On applying the monotonicity (\ref{mon}) of $\Omega\left(.,a\right)$ we can continue
the preceding estimates as
\[
\le n^{2}\left\Vert P_{n}\right\Vert _{K}\left(\left(1+o_{K}\left(1\right)\right)\left(1+\varepsilon\right)^{2}2\pi^2\Omega\left(K,a\right)^2+C_{1}\varepsilon^{2}\right).
\]
Since here $\varepsilon>0$ is arbitrary, the inequality $M(K,a)\le 2\pi^2\Omega(K,a)^2$ follows for regular $K$
from the
just given chain of inequalities.

To remove the regularity condition consider the sets
$K_m^-$ from (\ref{knm}). These are regular sets satisfying the interval condition
(\ref{intc}), so we can apply the just proven estimate to them:
\begin{multline*}
\left\Vert P_{n}'\right\Vert _{[a-\rho,a]}
\le
 n^{2}\left\Vert P_{n}\right\Vert _{K_m^{-}}\left(1+o_{K_m^{-}}\left(1\right)\right)2\pi^2\Omega\left(K_m^{-},a\right)^2
\\
\leq n^{2}\left\Vert P_{n}\right\Vert _{K}\left(1+o_{K_m^{-}}\left(1\right)\right)2\pi^2\Omega\left(K_m^{-},a\right)^2,
\end{multline*}
where we used $K_m^{-}\subset K$, and
hence $\left\Vert P_{n}\right\Vert _{K_m^{-}}\le\left\Vert P_{n}\right\Vert _{K}$.
Since on the right $\Omega\left(K_m^{-},a\right)$ can
be made arbitrarily close to $\Omega\left(K,a\right)$
by choosing a large $m$ (see Proposition \ref{prop:convergence}),
the inequality $M(K,a)\le 2\pi^2\Omega(K,a)^2$ follows in the
general case.\end{proof}

\begin{proof}[Proof of $M(K,a)\ge 2\pi^2\Omega(K,a)^2$]
We construct a sequence of polynomials $\left\{ P_{n}\right\} _{n=1}^\infty$, ${\rm deg}(P_n)=n$,
such that
\begin{equation} 
\frac{\left|P_{n}'\left(a\right)\right|}{n^2\left\Vert P_{n}\right\Vert _{K}}
\rightarrow2\pi^2\Omega\left(K,a\right)^2\quad \mbox{ as }n\rightarrow\infty.
\label{oi}\end{equation} 
Consider $K_m^{+}$ from (\ref{knp}) for some integer $m$. It is the  union of finitely many intervals,
but some of them may be degenerated, i.e. some of them may be a singleton.
Replace each such singletons in $K_m^+$ by an interval of length $<1/m$
(alternatively, for $m>1/\rho$  we could set $K_m^+$ as the
set $\{x\sep {\rm dist}(x,K)\le 1/m\}\setminus (a,a+2\rho)$).
The resulting set, which we continue to denote by $K_m^+$, consists of non-degenerated intervals,
so we can apply the sharpness result
in \cite{Totikacta}, formula (4.7) on page 155, according to which there is a
 sequence $\left\{ P_{m,n}\right\} _{n=1}^\infty$, ${\rm deg}(P_{m,n})=n$, of polynomials  such that
\[
\left|P_{m,n}'\left(a\right)\right|\ge\left(1-o_{K_m^+}(1)\right)
2\pi^2\Omega\left(K_m^{+},a\right)^2 n^2\left\Vert P_{m,n}\right\Vert _{K_m^{+}},
\]
where $o_{K_m^+}(1)$ depends on ${K_m^+}$ and it tends to $0$ as $n\rightarrow\infty$
for each fixed $m$. Since $K\subset K_m^{+}$, we have
$\left\Vert P_{m,n}\right\Vert _{K_m^{+}}\ge\left\Vert P_{m,n}\right\Vert _{K}$,
so
\[
\left|P_{m,n}'\left(a\right)\right|\ge\left(1-o_{K_m^+}(1)\right)2\pi^2\Omega\left(K_m^{+},a\right)^2n^{2}
\left\Vert P_{m,n}\right\Vert _{K}.
\]
Since here  $\Omega\left(K_m^{+},a\right)$ can be made arbitrarily close
to $\Omega\left(K,a\right)$ by selecting a sufficiently large $m$  (see Proposition \ref{prop:convergence},
which holds true also for these modified sets $K_m^+$),
the relation (\ref{oi})
follows for $P_n=P_{m_n,n}$ if $m_n$ tends to infinity  sufficiently slowly as $n\rightarrow\infty$.
\end{proof}

\section{Proof of Theorem \ref{thschur}}
First we need to verify Theorem \ref{thschur} in the special case when $K$ consists of finitely many intervals.
In this case we use the polynomial inverse image technique of \cite{TotikSA}, and
deduce the theorem from Schur's inequality (\ref{Schur}).

\begin{proof}[Proof of  Theorem \ref{thschur} when $K$ consists of finitely many intervals]
  First   we   deal with the estimate (\ref{conc}).

Let $K=\cup_{j=1}^l[a_{2j-1},a_{2j}]$. For any $\varepsilon>0$ there is a set
$K^*=\cup_1^l[a_{2j-1}^*,a_{2j}]$ such that
\begin{equation} a_{2j-1}-\varepsilon<a_{2j-1}^*<a_{2j-1}\qquad \mbox{ for all
$j$},\label{dd}\end{equation} 
 and $K^*$ is the complete inverse image of $[-1,1]$ under a polynomial
$T_N$ of some degree $N$: $K^*=T_N^{-1}[-1,1]$, see \cite[Theorem 2.1]{Totikacta}
(cf. also the history of this density theorem in \cite{TotikSA}). This $T_N$ then has $N$ zeros on $K^*$,
and $T_N(x)$ runs through the interval $[-1,1]$ precisely $N$ times as $x$ runs through $K^*$.
Thus, there are intervals $E_1,\ldots,E_N\subset K^*$, $K^*=\cup_{k=1}^N E_k$,
 that are disjoint except perhaps
for their endpoints and  $T_N$ is a bijection from each $E_k$ onto $[-1,1]$.
The point $a$ is the right-endpoint of one of these intervals, say of
$E_1$ (the numbering of the $E_k$'s is arbitrary). 
The equilibrium density of $K^*$ has the form (see \cite[(3.8)]{Totikacta})
\[
\omega_{K^*}(t)=\frac{|T_N'(t)|}{N\pi\sqrt{1-T_N^2(t)}},
\]
which easily implies that
\begin{equation} \Omega(K^*,a)=\frac{|T_N'(a)|^{1/2}}{\sqrt 2 \pi N}.\label{tnp}\end{equation} 

 For an $\eta>0$ choose $\delta>0$ so that for all $t\in [a-\delta,a]$
\begin{equation} \frac{\sqrt{2 |T_N'(a)|}}{(1+\eta)\sqrt{1-T_N(t)^2}}\le \frac{1}{\sqrt{a-t}}\le
(1+\eta)\frac{\sqrt{2 |T_N'(a)|}}{\sqrt{1-T_N(t)^2}}\label{po}\end{equation} 
(this is possible since $T_N^2(a)=1$ and $T_N'(a)\not=0$), and
\begin{equation} \frac{h(a)}{1+\eta}\le h(t)\le (1+\eta)h(a)\label{po1}\end{equation} 
are true. We may also suppose that $\delta$ is smaller than
$\rho$ (see (\ref{intc})) and smaller than the quarter-length of $E_1$.
For an $n$ choose (see e.g. \cite[Corollary VI.3.6]{SaffTotik}) polynomials
 $Q_{\varepsilon n}$ of degree at most $\varepsilon n$ such that with some $0<q<1$ we have
\begin{enumerate}[(i)]
\item $|Q_{\varepsilon n}(t)-1|<q^n$ if $t\in [a-\delta,a]$,
\item $0\le Q_{\varepsilon n}(t)\le 1$ on $[a-2\delta,a-\delta]$, and
\item $0\le Q_{\varepsilon n}(t)<q^n$ if $t\in K^*\setminus [a-2\delta,a]$.
\end{enumerate}

For $t\in E_1$ let $t_k=t_k(t)\in E_k$ be the point with
$T_N(t)=T_N(t_k)$. Now if $P_n$ is a polynomial as in the theorem, then we set for $t\in E_1$
\[
S_n(t)=\sum_{k=1}^N (P_nQ_{\varepsilon n})(t_k).
\]
Actually, $S_n(t)$ is a polynomial with degree at most $n+\varepsilon n$, see \cite{Totikacta} formula (3.13).
Note that all $t_k$, $k\ge 2$ are outside the interval $[a-2\delta,a]$,
hence, in view of (\ref{global}) and (iii),  with any $0<q<q_1<1$ we have
the relation
\begin{equation} 
S_n(x)=P_n(x)+O(q_1^n), \qquad x\in E_1,\label{fg}\end{equation} 
furthermore for $x\in E_1\setminus [a-2\delta,a]$ we even have $S_n(x)=O(q_1^n)$.
Thus, in view of the assumption (\ref{local}), for $x\in [a-\delta,a]$
we get from (\ref{po}) and (\ref{po1}) that
\[|S_n(x)|\le \frac{h(x)}{\sqrt {a-x}}+O(q_1^n)
\le (1+\eta)^2 h(a) \frac{\sqrt{2|T_N'(a)|}}{\sqrt{1-T_N^2(x)}}+O(q_1^n),\]
which gives
\begin{equation} |S_n(x)|\le  (1+\eta)^3 h(a) \frac{\sqrt{2|T_N'(a)|}}{\sqrt{1-T_N^2(x)}}\label{ui}\end{equation} 
for all large $n$.

In a similar manner, if $x\in K\setminus[a-2\delta,a]$, then
\[|S_n(x)|=O(q_1^n)\le  (1+\eta)^3 h(a) \frac{\sqrt{2|T_N'(a)|}}{\sqrt{1-T_N^2(x)}}\]
for all large $n$, i.e. (\ref{ui}) is true for all $x\in E_1$.

Now $S_n(x)=V_m(T_N(x))$ with some polynomial $V_m$ of degree at most
$\deg (P_n Q_{\varepsilon n})/N\le (1+\varepsilon)n/N$ (see \cite[Section 5]{TotikSA}), and then
(\ref{ui}) can be written in the form
\[V_m(w)\le (1+\eta)^3 h(a) \frac{\sqrt{2|T_N'(a)|}}{\sqrt{1-w^2}}, \qquad w\in (-1,1).\]
Upon applying the Schur inequality (\ref{Schur0})--(\ref{Schur}) we obtain from $(m+1)\le (1+2\varepsilon)n/N$
(which certainly holds for large $n$)
\[\|V_m\|_{[-1,1]}\le (1+\eta)^3 h(a) \sqrt{2|T_N'(a)|}(m+1)\le
(1+\eta)^3 (1+2\varepsilon) h(a) \sqrt{2|T_N'(a)|}n/N.\]
Using (\ref{tnp}), (\ref{fg}) and $S_n(x)=V_m(T_N(x))$, we can conclude that
\[|P_n(x)+O(q_1^n)|\le (1+\eta)^3 (1+2\varepsilon) h(a) 2\pi\Omega(K^*,a)n,\qquad x\in E_1,\]
which gives
\begin{equation} |P_n(x)|\le (1+\eta)^4 (1+2\varepsilon) h(a) 2\pi\Omega(K^*,a)n,\qquad x\in E_1,\label{ty}\end{equation} 
for sufficiently large $n$. Finally, using the monotonicity property
(\ref{mon}) of $\Omega$ it follows from $K\subset K^*$ that
\begin{equation} |P_n(x)|\le (1+\eta)^4 (1+2\varepsilon) h(a) 2\pi\Omega(K,a)n,\qquad x\in E_1.\label{ty1}\end{equation} 

This is the desired estimate on $E_1$. On $[a-\rho,a]\setminus E_1$ the polynomials $P_n$
are bounded by the assumption (\ref{local}), hence (\ref{ty1}) is true on
all $[a-\rho,a]$ if $n$ is large.  Since $\varepsilon,\eta>0$
in (\ref{ty1}) are also arbitrarily small, the inequality (\ref{conc}) follows.

\bigskip

We still need to prove (\ref{conc1}) in the case considered, i.e. when
 $K=\cup_1^l[a_{2j-1},a_{2j}]$. We use the notations from the preceding proof.

The estimate of Schur in (\ref{Schur}) is sharp: if
${\mathcal{T}}_m(x)=\cos(m\arccos x)$ are the classical Chebyshev polynomials
and $H_m(x)={\mathcal{T}}_{m+1}'(x)/(m+1)$, then (\ref{Schur0}) is true  (use Bernstein's inequality
(\ref{B})) and
\[|H_m(\pm1)|=m+1.\]
Set now
\[P_n(x)=h(a)H_m(T_N(x))U_{\sqrt{ n}}(x)\frac{\sqrt{2 |T_N'(a)|}}{(1+\eta)^2}\]
 where $m=[(n-\sqrt{ n})/N]$
(the integral part of $(n-\sqrt n)/N$) and
$U_{\sqrt {n}}(x)$ is a polynomial of degree smaller than $\sqrt{ n}$
for which $U_{\sqrt{ n}}(a)=1$ and $U_{\sqrt{ n}}(x)\rightarrow 0$ uniformly on
compact subsets of $K^*\setminus \{a\}$.
This is a polynomial of degree at most $n$, and for it we have
\[|P_n(x)|\le h(a)\frac{1}{\sqrt{1-T_N^2(x)}}U_{\sqrt n}(x)\frac{\sqrt{2 |T_N'(a)|}}{(1+\eta)^2}.\]
Using (\ref{po})--(\ref{po1}) and the properties of $U_{\sqrt n}$
it follows that for large $n$ we have
\[|P_n(x)|\le \frac{h(x)}{\sqrt{|a-x|}},\qquad x\in K^*.\]
At the same time, for large $n$,
\begin{multline*} 
|P_n(a)|=
h(a)|H_m(\pm 1)|\frac{\sqrt{2 |T_N'(a)|}}{(1+\eta)^2}
=h(a)(m+1)\frac{\sqrt{2 |T_N'(a)|}}{(1+\eta)^2}
\\
\ge h(a)\frac{n}{(1+\eta) N}\frac{\sqrt{2 |T_N'(a)|}}{(1+\eta)^2},
\end{multline*}
which gives, in view of (\ref{tnp}), the inequality
\[|P_n(a)|\ge  h(a)\frac{n}{(1+\eta)^3}2\pi\Omega(K^*,a).\]
This estimate contains $\Omega(K^*,a)$, and here $K^*$ is a set close to
$K$, but it depends on $\varepsilon>0$ (see (\ref{dd})). With the same argument
that was used in Proposition \ref{prop:convergence} we obtain that on the right-hand side
$\Omega(K^*,a)$ is as close
to $\Omega(K,a)$ as we wish if $\varepsilon>0$ is sufficiently small. Since $\eta>0$ is also
arbitrary, (\ref{conc1}) follows.\end{proof}

\begin{proof}[Proof of Theorem \ref{thschur} for regular sets]

Let now $K\subset \mathbf{R}$ be a regular compact set with the interval condition
(\ref{intc}), and consider the sets $K_m^+$ from
(\ref{knp}). Let $P_n$ be as in the theorem, and with the $Q_{\varepsilon n}$
satisfying properties a)--c) in the proof of Theorem \ref{thmain}
apply the finite interval case of Theorem \ref{thschur}
to the set $K_m^+$ and to the polynomial $P_nQ_{\varepsilon n}$ of
degree at most $(1+\varepsilon)n$. Since $\|P_nQ_{\varepsilon n}\|_{K_m^+}\le \|P_n\|_K$
(see (\ref{eq:pq_estimate})) and $|P_n(x)Q_{\varepsilon n}(x)|\le |P_n(x)|$
for $x\in [a-\rho,a]$, we can conclude that
\begin{multline*}
 \|P_nQ_{\varepsilon n}\|_{[a-\rho,a]}
\le
 n(1+\varepsilon)(1+o(1))2\pi h(a)\Omega(K_m^+,a)
\\
\le
n(1+\varepsilon)(1+o(1))2\pi h(a)\Omega(K,a),
\end{multline*}
where we have also used the monotonicity property (\ref{mon}).
Since $Q_{\varepsilon n}(x)=1-o(1)$ on $[a-\rho,a]$ and $\varepsilon>0$ is arbitrary,
we can conclude (\ref{conc}).

As for (\ref{conc1}), we can choose a sequence $\{P_{m,n}\}_{n=1}^\infty$ for
the set $K_m^+$ as in (\ref{conc1}), i.e. the polynomials
$P_{m,n}$ satisfy (\ref{local})--(\ref{global})
with $P_n$ replaced by $P_{m,n}$, and
\begin{equation} 
|P_{m,n}(a)|\ge n(1-o(1))2\pi h(a)\Omega(K_m^+,a),\qquad n=1,2,\ldots\label{conc1*}
\end{equation} 
By Proposition \ref{prop:convergence} on the right-hand side
the factor $\Omega(K_m^+,a)$ can be
as close as we wish to $\Omega(K,a)$. Hence, if $m=m_n$ tends to infinity
with $n$ but sufficiently slowly, then the polynomials
$P_n=P_{m_n,n}$ satisfy (\ref{local})--(\ref{global}) and (\ref{conc1}).

\end{proof}

\begin{proof}[Proof of Theorem \ref{thschur} for arbitrary sets]
Let now $K\subset \mathbf{R}$ be an arbitrary compact set with the interval condition
(\ref{intc}), and consider the sets $K_m^-$ from
(\ref{knm}). These are regular sets satisfying the same interval condition,
and if $P_n$ are as in the theorem then clearly $P_n$ satisfy the
same conditions on $K_m^-$ instead of $K$. Thus, according to what we
have just proven,
\begin{equation}
  \|P_n\|_{[a-\rho,a]}\le n(1+o(1))2\pi h(a)\Omega(K_m^-,a).\label{c2}
\end{equation} 
On the right-hand side $\Omega(K_m^-,a)$ converges to $\Omega(K,a)$
as $m\rightarrow\infty$ (see Proposition \ref{prop:convergence}), hence (\ref{conc}) can be
concluded from (\ref{c2}).

As for (\ref{conc1}), just repeat the argument given in the preceding proof (with
the modification of $K_m^+$ as at the end of the proof of Theorem \ref{thmain}
when $K_m^+$ contains singletons).\end{proof}

\section*{Acknowledgements}

The first author was supported by the European Research Council Advanced
grant No. 267055, while he had a postdoctoral position at the Bolyai Institute,
University of Szeged, Aradi v. tere 1, Szeged 6720, Hungary.

The second author was supported by Magyary scholarship: This research was
realized in the frames of T\'AMOP 4.2.4. A/2-11-1-2012-0001 ,,National
Excellence Program - Elaborating and operating an inland student and
researcher personal support system.'' The project was subsidized by
the European Union and co-financed by the European Social Fund.

The third author was supported by NSF grant DMS-1265375.

The final publication is available at Springer via \href{http://dx.doi.org/10.1007/s11785-014-0405-z}{http://dx.doi.org/10.1007/s11785-014-0405-z}.

\bigskip{}

%\bibliographystyle{abbrv}
%\bibliography{refs}
\providecommand{\bysame}{\leavevmode\hbox to3em{\hrulefill}\thinspace}
\providecommand{\MR}{\relax\ifhmode\unskip\space\fi MR }
% \MRhref is called by the amsart/book/proc definition of \MR.
\providecommand{\MRhref}[2]{%
  \href{http://www.ams.org/mathscinet-getitem?mr=#1}{#2}
}
\providecommand{\href}[2]{#2}

\bigskip

Sergei Kalmykov \\
Institute of Applied Mathematics, FEBRAS, 7 Radio Street, Vladivostok,
690041, Russian Federation,  \\
 Far Eastern Federal University, 8 Sukhanova Street, Vladivostok, 690950, Russia and\\
Bolyai Institute, University of Szeged,  Szeged, Aradi v. tere
1, 6720, Hungary \\
email address: \texttt{sergeykalmykov@inbox.ru}
\medskip

B\'ela Nagy\\
MTA-SZTE Analysis and Stochastics Research Group, Bolyai Institute, University of Szeged,
Szeged, Aradi v. tere 1, 6720, Hungary \\
email address: \texttt{nbela@math.u-szeged.hu}
\medskip

Vilmos Totik\\
University of Szeged, Bolyai Institute,
Szeged, Aradi v. tere 1, 6720, Hungary\\
and\\
Department of Mathematics and Statistics, University of South Florida, 4202 E. Fowler Ave.
CMC342, Tampa, FL 33620\\
email address: \texttt{totik@mail.usf.edu}

\end{document}